\def\i{{\rm i}}
\def\Z{\mathbb Z}
\def\N{\mathbb N}
\def\e{{\rm e}}
\def\C{\mathbb C}
\def\X{\mathcal X}
\def\H{\mathcal H}
\def\Y{\mathcal Y}
\def\T{\mathsf T}
\def\G{{\rm G}}
\title{\sffamily Counting degree-constrained orientations}
\author{
  \textbf{Jing Yu}$^{b}$ \and
  \textbf{Jie-Xiang Zhu}$^{a, *}$%
}
\date{} 
\begin{document}

\maketitle

\vspace{-1.2em}

\begin{center}
\footnotesize
${}^{a}$Department of Mathematics, Shanghai Normal University, Shanghai, China\\
${}^{b}$Shanghai Center for Mathematical Sciences, Fudan University, Shanghai, China\\[0.3em]
\end{center}

\footnotetext[1]{%
Corresponding author.\par
E-mail addresses: 
\texttt{jyu@fudan.edu.cn} (J.~Yu),
\texttt{zhujx@shnu.edu.cn} (J.-X.~Zhu).
}

\begin{abstract}
We study the enumeration of graph orientations under local degree constraints. Given a finite graph $G = (V, E)$ and a family of admissible sets $\{\mathsf P_v \subseteq \mathbb{Z} : v \in V\}$, let $\mC(G; \prod_{v \in V} \mathsf P_v)$ denote the number of orientations in which the out-degree of each vertex $v$ lies in $\mathsf P_v$. We prove a general duality formula expressing $\mathcal N(G; \prod_{v \in V} \mathsf P_v)$ as a signed sum over edge subsets, involving products of coefficient sums associated with $\{\mathsf P_v\}_{v \in V}$, from a family of polynomials. Our approach employs gauge transformations, a technique rooted in statistical physics and holographic algorithms. We also present a probabilistic derivation of the same identity, interpreting the orientation-generating polynomial as the expectation of a random polynomial product. As applications, we obtain explicit formulas for the number of even orientations and for mixed Eulerian-even orientations on general graphs. Our formula generalizes a result of Borb\'enyi and Csikv\'ari on Eulerian orientations of graphs.
\end{abstract}

\vskip 0.5cm

\noindent
 \textbf{AMS Subject Classification (2020)}:\  05C30, 05C20, 05A15, 60C05.   \\
\noindent
 \textbf{Keywords}:  orientations, gauge transformations, random polynomials.
 \vskip 0.5cm

\section{Introduction}
Let $G = (V, E)$ be a finite graph. An \emphd{orientation} of $G$ is an assignment of a direction to each edge $\{u, v \} \in E$, turning the original graph into a directed graph. For each $v \in V $, let \( \mathsf{P}_v \subseteq \mathbb{Z} \) be an admissible set associated with \( v \). In this paper, we study the enumeration of orientations of $G$ under the constraint that the out-degree of each vertex $v$ belongs to $\mathsf P_v$, and we denote the total number of such orientations by $\mC (G; \prod_{v \in V} \mathsf P_v)$. By symmetry, reversing the orientation of each edge shows that $\mC (G; \prod_{v \in V} \mathsf P_v)$ also represents the number of orientations of $G$ in which the in-degree of each vertex $v$ lies in $\mathsf P_v$. In what follows, if $\mathsf P_v = \mathsf P \subseteq \Z$ for all $v \in V$, we use the notation $\mC (G; \mathsf P)$ to denote $\mC (G; \prod_{v \in V} \mathsf P_v)$.

Many orientations of interest in the literature are covered by our framework. For instance, an \emphd{Eulerian orientation} of a graph is an orientation in which every vertex has equal in-degree and out-degree. If $d$ is an even number, then the number of Eulerian orientations of a $d$-regular graph $G$  is exactly equal to  $\mC (G; \{ d/2  \})$. The study of Eulerian orientations on regular graphs has a long history: Nagle already derived formulas for 4-regular graphs in 1966 \cite{nagle1966lattice}, and additional related results were later collected in the classical work of Lieb and Wu \cite{LiebWu}. The following result was established by Borb\'enyi and Csikv\'ari \cite{borbenyi2020counting}.

\begin{theo}[{\cite[Theorem~5.1]{borbenyi2020counting}}]\label{Euler orientations, regular}
Let $G = (V, E)$ be a $d$-regular graph. %The subgraph counting polynomial of $G$ is defined by
Let 
\[F_G(x_0, \ldots, x_d) : = \sum_{A \subseteq E} \left( \prod_{v \in V} x_{d_A(v)}\right),\]
where $d_A(v)$ is the degree of the vertex $v$ in the subgraph $(V, A)$. %Let $\underline{s}=(s_0,s_1,\dots ,s_d)$ be defined as follows.
Then $F_G(s_0,\dots ,s_d)$ counts the number of Eulerian orientations of $G$, where 
\[s_k : = \left\{ 
\begin{array}{cc} \frac{\binom{d}{d/2}\binom{d/2}{k/2}}{2^{d/2}\binom{d}{k}} & \mbox{if}\ \ k\ \ \mbox{is even}, \\
0  & \mbox{if}\ \ k\ \ \mbox{is odd}.
\end{array} \right.\]
\end{theo}

For non-regular graphs they also have a corresponding theorem. For details, see \cite[Theorem~5.3]{borbenyi2020counting}. 
Their argument is based on a method known as \emphd{gauge transformations} in statistical physics (see \cite{chertkov2006loop1,chertkov2006loop2}) a.k.a. \emph{holographic reduction} in computer science (see \cite{cai2008basis,cai2017complexity,  cai2008holographic1, cai2008holographic2, cai2009holographic, cai2010symmetric,cai2011holographic,valiant2002expressiveness,valiant2002quantum,valiant2006accidental,valiant2008holographic})  and has been extensively applied in combinatorics in recent years (see e.g., \cite{borbenyi2020counting, bencs2024number}). In the present work, we utilize this method to derive a duality theorem, which is then applied to compute $\mC(G; \prod_{v \in V} \mathsf P_v)$ for some specific choices of $\mathsf P_v \subseteq \Z$.

In this paper, we establish a general duality formula (Theorem \ref{Duality}) that expresses the number of degree-constrained orientations of a finite graph as a signed sum over its edge subsets. Our approach is based on gauge transformations originating from statistical physics and holographic algorithms. To complement this, we provide an independent probabilistic derivation of the same identity by interpreting the orientation-generating polynomial as the expectation of a product of random univariate polynomials. These techniques allow us to derive explicit formulas for counting even orientations, Eulerian-even orientations, and more generally, $N$-divisible orientations. Our results unify and extend earlier work of Borbényi and Csikvári on Eulerian orientations of graphs.

\section{The gauge transformations and duality theorem}
We employ the gauge transformations.
The following notation and terminology are borrowed from \cite{borbenyi2020counting}.
\begin{defn}
A \emphd{normal factor graph} $\H = (V, E, \X, (f_v)_{v\in V})$ is a graph $(V, E)$ equipped with an alphabet $\mathcal X$ and functions $f_v: \X^{d_v} \to \C$ associated with each vertex $v\in V$, where $d_v$ denotes the degree of $v$. The functions $f_v$ are typically referred to as \emphd{local functions}. The set $\X^E$, identified with the collection of all maps from $E$ to $\X$, is called the \emphd{configuration space} of $\H$. The corresponding \emphd{partition function} is defined by
$$
Z(\H) : = \sum_{\sigma \in \X^E} \prod_{v \in V} f_v (\sigma_{\partial v}),
$$
where, for each $\sigma \in \X^E$, $\sigma_{\partial v}$ is the restriction of $\sigma$ to the edges incident to the vertex $v$. Partition functions can indeed be naturally expressed in the Holant problem framework, for example, see \cite{cai2011holographic}.
\end{defn}

Given a normal factor graph $\H = (V, E, \X, (f_v)_{v\in V})$ as defined above. Let $\Y$ be another alphabet, and for each $\{ u, v \} \in E$, let us introduce two matrices $\G_{uv}, \G_{vu} \in \C^{|\Y| \times |\X|}$. We define a family of local functions as follows. For each $v \in V$ with degree $d_v$ and neighborhood $\{u_1, \ldots, u_{d_v} \}$, and $(\tau_{vu_1}, \ldots, \tau_{vu_{d_v}}) \in \Y^{d_v}$, let
$$
\widehat f_v (\tau_{vu_1}, \ldots, \tau_{vu_{d_v}}) : = \sum_{(\sigma_{vu_1}, \ldots, \sigma_{vu_{d_{v}}}) \in \X^{d_v}} \left( \prod_{i = 1}^{d_v} \G_{v u_i} (\tau_{vu_i}, \sigma_{vu_i}) \right) f_v (\sigma_{vu_1}, \ldots, \sigma_{vu_{d_v}}).
$$
Then we can define a new normal factor graph $\widehat \H : = (V, E, \Y, (\widehat f_v)_{v\in V})$. Such a transformation is called a gauge transformation. We will use the following property of this transformation. For its proof, see \cite{chertkov2006loop1,chertkov2006loop2,valiant2008holographic}.

\begin{prop} \label{gauge}
If $\G_{uv}^{\T} \G_{vu} = {\rm Id}_{\X}$ holds for every edge $\{ u, v \} \in E$, then $Z(\H) = Z(\widehat \H)$.
\end{prop}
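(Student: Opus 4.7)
The strategy is a direct expand-and-collapse computation that uses the orthogonality hypothesis edge by edge. The plan is to expand each $\widehat f_v$ in the definition of $Z(\widehat\H)$, interchange the order of summation so that the sum over $\tau\in\Y^E$ moves inward, factorize over edges, and collapse using $\G_{uv}^{\T}\G_{vu}={\rm Id}_\X$.

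Starting from
\[Z(\widehat\H)=\sum_{\tau\in\Y^E}\prod_{v\in V}\widehat f_v(\tau_{\partial v}),\]
I would substitute the definition of each $\widehat f_v$, which introduces a fresh ``half-edge'' variable $\sigma^{(v)}_{uv}\in\X$ for every ordered pair $(v,u)$ with $u\sim v$. On each edge $\{u,v\}$ this produces two initially independent $\X$-variables $\sigma^{(u)}_{uv}$ and $\sigma^{(v)}_{uv}$. After swapping the sums so that the outer sums run over these half-edge variables and the innermost one over $\tau$, the $\tau$-dependence lives only in the $\G$-factors, and these factorize cleanly across edges.

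The per-edge $\tau$-sum is then
\[\sum_{\tau_{uv}\in\Y}\G_{vu}(\tau_{uv},\sigma^{(v)}_{uv})\,\G_{uv}(\tau_{uv},\sigma^{(u)}_{uv})=\bigl(\G_{vu}^{\T}\G_{uv}\bigr)_{\sigma^{(v)}_{uv},\sigma^{(u)}_{uv}},\]
which equals $\delta_{\sigma^{(v)}_{uv},\sigma^{(u)}_{uv}}$ by the hypothesis, together with the elementary observation that transposing both sides of $\G_{uv}^{\T}\G_{vu}={\rm Id}_\X$ yields $\G_{vu}^{\T}\G_{uv}={\rm Id}_\X$ as well. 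Applying this Kronecker delta on every edge identifies the two half-edge variables on each edge into a single configuration $\sigma\in\X^E$, and what remains is exactly $\sum_{\sigma\in\X^E}\prod_v f_v(\sigma_{\partial v})=Z(\H)$.

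The only real obstacle is bookkeeping. One must keep careful track of which of $\G_{uv}$ and $\G_{vu}$ arises from which vertex's expansion, and in which slot the $\tau$ and $\sigma$ variables sit, so that the per-edge sum lines up with the transpose appearing in the hypothesis. Once this indexing is pinned down, the rest is pure algebraic orthogonality applied independently to each edge, and no combinatorial argument is needed.
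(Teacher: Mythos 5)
Your argument is correct: expanding each $\widehat f_v$, interchanging sums so the $\tau$-sum factorizes edge by edge, and collapsing the two half-edge variables via $\bigl(\G_{vu}^{\T}\G_{uv}\bigr)_{\sigma^{(v)},\sigma^{(u)}}=\delta_{\sigma^{(v)},\sigma^{(u)}}$ (obtained by transposing the hypothesis) is exactly the standard proof. The paper itself gives no proof of Proposition~\ref{gauge}, deferring to the cited references on gauge transformations and holographic reduction, and your computation is precisely the argument found there.
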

%\todo{we should cite M. Chertkov and V. Chernyak [8, 9] for prop 2.2}

In what follows, we always take $\X = \Y = \{0, 1\}$. Given $\mathsf P \subseteq \Z$, we define a (linear) functional $\mathcal C(\cdot; \mathsf P)$ on the set of all polynomials as follows: for any polynomial $q(z) \in \Z[z]$,  $\mathcal C(q(z) \,; \,\mathsf P)$ represents the sum of all coefficients of $z^\ell$ in $q(z)$ with $\ell \in \mathsf P$. The following shift property will also be used in the proof. For any $j \in \N$,
\[
\mathcal C (q(z) \, ; \, \mathsf P -j) = \mathcal C(z^j q(z) \, ; \, \mathsf P),
\]
where $\mathsf P - j : = \{ n - j : n \in \mathsf P \}$.

By applying gauge transformations, as described in the proofs of \cite[Lemma 4.3]{borbenyi2020counting} and \cite[Theorem 2.1]{bencs2024number}, to the problem of counting orientations, we obtain the following \emphd{duality theorem}  which transforms the sum over orientations into one over subgraphs.

%\todo{mentioni $\i = \sqrt{-1}$}
\begin{theo} \label{Duality}
Let $G = (V, E)$ be a finite graph. For each $v \in V $, let \( \mathsf{P}_v \subseteq \mathbb{Z} \). The total number of orientations of $G$ in which the out-degree of each vertex $v$ belongs to $\mathsf P_v$ is denoted by $\mC (G; \prod_{v \in V} \mathsf P_v)$. Then,
\begin{align*}
\mC (G;  {\textstyle \prod_{v \in V} \mathsf P_v}) = \sum_{F \subseteq E} \frac{(-1)^{|F|}}{2^{|E|}}  \prod_{v \in V} \mathcal C\left((1 - z)^{d_F(v)} (1 + z)^{d_{v} - d_F(v)} \, ; \, \mathsf P_v \right).
\end{align*}
where $d_v$ (respectively $d_F(v)$) is the degree of the vertex $v$ in $G$ (respectively $(V, F)$). 
\end{theo}

\begin{proof}
We begin by constructing a normal factor graph whose partition function encodes the desired count. Let $\mathrm{Sub}(G)$ be the following subdivision of the graph $G$: we put a vertex $e_{u, v}$ to every edge $\{ u, v \}\in E$. The vertex set $V(\mathrm{Sub}(G))$ naturally corresponds to $V \cup E$, and the edge set $E(\mathrm{Sub}(G))$ is $\{\{u, e_{u, v}\} : \{ u, v \} \in E   \}$. The configuration space $\{0 ,1\}^{E(\mathrm{Sub}(G))}$ includes all orientations and subgraphs of $G$. Indeed, for any orientation of $G$, we can define an associated configuration $\sigma : E(\mathrm{Sub}(G)) \to \{0, 1\}$ as follows: for each edge $\{ u, v \}$ with direction $u \to v$, set $\sigma_{ue_{u, v}} = 1$ and $\sigma_{e_{u, v}v} = 0$. For any subgraph of $G$, we can define an associated configuration $\tau : E(\mathrm{Sub}(G)) \to \{0, 1\}$ as follows: for each edge $\{ u, v \} \in E$, if it belongs to the subgraph, set $\tau_{u e_{u, v}} = \tau_{v e_{u, v}} = 1$; otherwise, set $\tau_{ue_{u, v}} = \tau_{ve_{u, v}} = 0$. Thus, the set of all the orientations of $G$ is identified with 
$$\mathcal O(G) : = \big\{ \sigma \in  \{0 ,1\}^{E(\mathrm{Sub}(G))} \, : \, \sigma_{ue_{u, v}} +  \sigma_{v e_{u, v}} = 1 \text{ for each $\{ u, v \} \in E$} \big\};$$
and the set of all the subgraph of $G$ is identified with 
$$\mathcal S (G) : = \big\{ \tau \in  \{0 ,1\}^{E(\mathrm{Sub}(G))} \, : \, \tau_{ue_{u, v}} =  \tau_{v e_{u, v}} \text{ for each $\{ u, v \} \in E$} \big\}.
$$

Now, we define a family of local functions on $\mathrm{Sub}(G)$ as follows: For $v \in V$ with neighborhood $N_G(v) = \{u_1, \ldots, u_{d_v}  \}$, 
\[f_v ( \sigma_{ve_{v,u_1}},\dots ,\sigma_{ve_{v,u_{d_v}}}  ) :=
\begin{cases}
1 & \mbox{if}\ \ \sum_{u \in N_G(v)}\sigma_{ve_{v,u}} \in \mathsf P_v, \\
0 & \mbox{if}\ \ \sum_{u \in N_G(v)}\sigma_{ve_{v,u}} \notin \mathsf P_v;
\end{cases}
\]
and for $e_{u, v} \in E$,
\[f_{e_{u,v}} (\sigma_{ue_{u,v}},\sigma_{ve_{u,v}}) :=
\begin{cases}
1 & \mbox{if}\ \ \sigma_{ue_{u,v}} + \sigma_{ve_{u,v}} =1, \\
0 & \mbox{otherwise.}
\end{cases}
\]
Since for any $\sigma \in \mathcal O(G)$, $\sum_{u \in N_G(v)}\sigma_{ve_{v,u}}$ represents the out-degree of $\sigma$ at vertex $v$. By definition, $\mC (G; \prod_{v \in V} \mathsf P_v)$ is the partition function of the normal factor graph $\big( \mathrm{Sub}(G), \{0, 1\}, (f_{v})_{v \in V(\mathrm{Sub}(G))} \big)$.

Next we apply the gauge transformation. For each edge $e= \{ u,v\} \in E(G)$ we introduce two matrices in $\mathrm{Sub}(G)$:
$\G_{eu} = \G_{ev} = \G_1$ and $\G_{ue} = \G_{ve} = \G_2$, where
\[\G_1:=\frac{1}{\sqrt{2}}\left( \begin{array}{cc} 1 & 1 \\ \i& -\i \end{array}\right)\ \ \ \mbox{and} \ \ \ \G_2:=\frac{1}{\sqrt{2}}\left( \begin{array}{cc} 1 & 1 \\ -\i & \i \end{array}\right) \ \ \ \mbox{with} \ \ \i = \sqrt{-1}.\]
The rows and columns of $\G_1,\G_2$ are indexed by $0$ and $1$. Taking the gauge transformation, let us begin with $\widehat f_{e}$, where $e \in E$. A simple computation gives
\begin{align*}\widehat f_{e}(\tau_1,\tau_2) = \sum_{\sigma_1,\sigma_2 \in\{0, 1\}}\G_1(\tau_1,\sigma_1)\G_1(\tau_2,\sigma_2) f_{e}(\sigma_1,\sigma_2) = \begin{cases}
1 & \mbox{if}\ \ \tau_1 = \tau_2, \\
0 & \mbox{otherwise.}
\end{cases}
\end{align*}
This implies that, in the formulation of the new partition function, the summation is restricted to configurations corresponding to subgraphs of $G$. Then we turn to compute $\widehat f_{v} (\tau_1,\dots ,\tau_{d_v})$. By definition
\begin{align} \label{local}
\widehat f_{v}(\tau_1,\dots ,\tau_{d_v})=\sum_{\sigma_1,\dots ,\sigma_{d_v} \in \{0, 1\} }\prod_{i=1}^{d_v} \G_2(\tau_i,\sigma_i)f_{v}(\sigma_1,\dots ,\sigma_{d_v}).
\end{align}
Set $m : =  \sum_{i = 1}^{d_v} \sigma_i$ and $k: = \sum_{i = 1}^{d_v} \tau_i$. Recall that only those terms with $m \in \mathsf P_v$ remain. If $k = 0$, then
\[
\widehat f_{v}(\tau_1,\dots ,\tau_{d_v}) = \frac{1}{2^{{d_v}/2}}\sum_{\substack{0 \leq m \leq d_v\\ m \in \mathsf P_v}} \binom{d_v}{m} = \frac{\mathcal C \big((1 + z)^{d_v}; \mathsf P_v \big)}{2^{{d_v}/2}}.
\]
Now assume that $k\in\{1,\ldots, d_v\}$, where $k = \sum_{i = 1}^{d_v} \tau_i$. Notice that if there are $j$ positions where both $\sigma_i=\tau_i=1$, then its contribution to the sum \eqref{local} is $\i^j(-\i)^{k-j} = (-1)^{k-j} \i^k$. Hence,
\[
\widehat f_{v} (\tau_1,\dots ,\tau_{d_v}) = \sum_{\sigma_1,\dots ,\sigma_{d_v} \in\{0, 1\}}\prod_{i=1}^{d_v} \G_2(\tau_i,\sigma_i)f_v(\sigma_1,\dots ,\sigma_{d_v}) =\frac{1}{2^{{d_v}/2}}\sum_{\substack{0 \leq m \leq d_v\\ m \in \mathsf P_v}}\sum_{j=0}^{k} \binom{k}{j} \binom{d_v-k}{m-j}(-1)^{k-j}\i^k,
\]
where by convention, $\binom{x}{y} = 0$ if $y < 0$ or $y > x$. Noting that when $0 \leq m \leq d_v$, $m - j$ runs over all nonnegative integers between $0$ and $d_v - k$, by exchanging the order of summation we deduce 
\begin{align*}
\widehat f_{v} (\tau_1,\dots ,\tau_{d_v}) = \frac{(-\i)^k}{2^{{d_v}/2}}  \sum_{j=0}^{k} (-1)^j \binom{k}{j} \cdot \mathcal C\big( (1 + z)^{d_v - k} \, ; \, \mathsf P_v-j  \big) = \frac{(-\i)^k}{2^{{d_v}/2}}  \sum_{j=0}^{k} (-1)^j \binom{k}{j} \cdot \mathcal C\big( z^j (1 + z)^{d_v - k}  \, ; \, \mathsf P_v  \big).
\end{align*}
Due to the fact that $\mathcal C(\cdot, \mathsf P_v)$ is linear, we further obtain
\begin{align*}
\widehat f_{v} (\tau_1,\dots ,\tau_{d_v})  = \frac{(-\i)^k}{2^{d_v/2}} \cdot \mathcal C\big( (1 - z)^k (1 +z)^{d_v -k} \, ; \,  \mathsf P_v   \big).
\end{align*}
Since $\G_2^{\T} \G_1 = {\rm Id}$, it follows from Proposition \ref{gauge} that,
\begin{align*}
\mC (G;  {\textstyle \prod_{v \in V} \mathsf P_v}) 
&= \sum_{F \subseteq E} \frac{(-\i)^{\sum_{v\in V} d_F(v)}}{2^{\sum_{v\in V} d_v /2}}  \prod_{v \in V} \mathcal C\big( (1 - z)^{d_F(v)} (1 +z)^{d_v - d_F(v)} \, ; \,  \mathsf P_v   \big)\\
&= \sum_{F \subseteq E} \frac{(-1)^{|F|}}{2^{|E|}}  \prod_{v \in V} \mathcal C\big( (1 - z)^{d_F(v)} (1 +z)^{d_v - d_F(v)} \, ; \,  \mathsf P_v   \big),
\end{align*}
where we used the handshaking lemma. The proof is complete.
\end{proof}

\section{A probabilistic point of view}
We extend the definition of $\mathcal C(\cdot; \mathsf P)$ to the multivariate setting. Given $\mathsf S \subseteq \Z^k$, for any polynomial $q(z_1, \ldots, z_k)$, $\mathcal C\big(q(z_1, \ldots, z_k) \, ; \, \mathsf S  \big)$ represents the sum of all coefficients of $z_1^{\ell_1}\cdots z_k^{\ell_k}$ in $q(z_1, \ldots, z_k)$ with $(\ell_1, \ldots, \ell_k) \in \mathsf S$. Then $\mathcal C$ satisfies the following factorization property. For any $\mathsf S_1, \ldots, \mathsf S_k \subseteq \Z$,
\begin{align} \label{product}
\mathcal C \left( \prod_{j = 1}^k q_j(z_j) \, ; \, \prod_{j = 1}^k \mathsf S_j     \right) = \prod_{j = 1}^{k} \mathcal C( q_j(z_j) \, ; \, \mathsf S_j).
\end{align}

Let $G = (V, E)$ be a finite graph. We associate to each vertex $v \in V$ a variable $z_v$, and consider the following graph polynomial
\[
H_G({\bf z}) : = \prod_{\{ u,v \} \in E}(z_u + z_v).
\]
Then we have the following proposition, see also  \cite[page 3]{csikvari2022short}.

\begin{prop} \label{polynomial}
With the notation introduced above. For each $v \in V$, let $\mathsf P_v \subseteq \Z$. Then,
\[
\mC (G;  {\textstyle \prod_{v \in V} \mathsf P_v}) = \mathcal C (H_G({\bf z});  {\textstyle \prod_{v \in V} \mathsf P_v})
\]
\end{prop}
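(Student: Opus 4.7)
The plan is to expand the graph polynomial $H_G(\mathbf{z})$ directly and identify each monomial in the expansion with an orientation of $G$. Applying the distributive law to $H_G(\mathbf{z}) = \prod_{\{u,v\}\in E}(z_u + z_v)$, each term of the expansion comes from choosing, for every edge $\{u,v\}$, exactly one of the two summands $z_u$ or $z_v$. Such a choice over all edges is in natural bijection with an orientation $\omega$ of $G$: interpret the chosen endpoint as the tail of the oriented edge. Under this bijection, the monomial produced by $\omega$ is $\prod_{v \in V} z_v^{d_\omega^+(v)}$, where $d_\omega^+(v)$ denotes the out-degree of $v$ under $\omega$, because the exponent of $z_v$ simply counts the edges for which $v$ was chosen. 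Summing over all orientations yields the identity
$$H_G(\mathbf{z}) = \sum_{\omega \in \mathcal{O}(G)} \prod_{v \in V} z_v^{d_\omega^+(v)}.$$

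Next, I would apply the multivariate coefficient functional $\mathcal C\bigl(\cdot\,;\, \prod_{v \in V} \mathsf P_v\bigr)$ to both sides. By its definition, this functional returns the sum of coefficients of those monomials $\prod_{v} z_v^{\ell_v}$ for which $(\ell_v)_{v \in V} \in \prod_{v \in V} \mathsf P_v$. On the right-hand side, an orientation $\omega$ contributes $1$ precisely when $d_\omega^+(v) \in \mathsf P_v$ for every $v \in V$, and $0$ otherwise. The resulting count is exactly $\mC(G; \prod_{v \in V} \mathsf P_v)$, as desired.

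There is no substantive obstacle: the proposition is essentially a repackaging of the expansion of $H_G(\mathbf{z})$ as a generating function over orientations, followed by extraction of the constrained coefficients. The only point to verify carefully is that distinct orientations yield formally distinct selection patterns in the expanded product (so the sum enumerates $\mathcal{O}(G)$ with multiplicity one, without any cancellation), which is immediate from the observation that an orientation is determined by its choice of tail at each edge. The factorization property \eqref{product} is not strictly required for the argument, though it could be invoked to rephrase $\mathcal C(H_G(\mathbf{z});\,\prod_v \mathsf P_v)$ more formally once the generating-function identity above is in hand.
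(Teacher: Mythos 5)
Your proposal is correct and follows essentially the same argument as the paper: expanding $H_G(\mathbf{z})$ term by term, identifying each choice of summand per edge with an orientation whose monomial records the out-degree sequence, and then applying the coefficient functional $\mathcal C(\cdot\,;\,\prod_{v\in V}\mathsf P_v)$. The additional remark about distinct orientations yielding distinct selection patterns (hence no cancellation or overcounting) is a worthwhile explicit check that the paper leaves implicit.
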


\begin{proof}
Indeed, for any orientation of $G$, identify a corresponding term in the expansion of $H_G$ as follows: for each edge $\{ u, v\}$, if it is oriented as $u \to v$, select $z_u$ from $(z_u + z_v)$; if it is oriented as $v \to u$, select $z_v$. In this way, each orientation of $G$ corresponds bijectively to a term in the expansion of $H_G$, with the exponent vector encoding the out-degree of the corresponding orientation at each vertex. The proposition then follows immediately.
\end{proof}

Now we consider the sum
\[\sum_{F \subseteq E} \frac{(-1)^{|F|}}{2^{|E|}}  \prod_{v \in V} \mathcal C\left((1 - z)^{d_F(v)} (1 + z)^{d_{v} - d_F(v)} \, ; \, \mathsf P_v \right),
\]
which, by the factorization property \eqref{product}, equals to
\[
 \sum_{F \subseteq E} \frac{1}{2^{|E|}}   \mathcal C\bigg((-1)^{|F|}\prod_{v \in V}(1 - z_v)^{d_F(v)} (1 + z_v)^{d_{v} - d_F(v)} \, ; \, \mathsf \prod_{v \in V} \mathsf P_v \bigg).
\]
For each edge $e \in E$, we define a function  $\chi_e$ that maps subsets of $E$ to $\{ \pm 1 \}$ as follows: 
\[\chi_e(F) :=
\begin{cases}
-1 & \mbox{if}\ \ e \in F, \\
1 & \mbox{if}\ \ e \notin F.
\end{cases}
\]
Then for every $F \subseteq E$ and $v \in V$, we have
\[
(1 - z_v)^{d_F(v)} (1 + z_v)^{d_{v} - d_F(v)} = \prod_{\{ e \in E \, : \, v \in  e  \}} (1 + \chi_e(F) z_v).
\]
Taking into account that $(-1)^{|F|} = \prod_{e \in E} \chi_e(F)$, we yield
\begin{equation} \label{e0}
\begin{split}
& \sum_{F \subseteq E} \frac{1}{2^{|E|}}   \mathcal C\bigg((-1)^{|F|}\prod_{v \in V}(1 - z_v)^{d_F(v)} (1 + z_v)^{d_{v} - d_F(v)} \, ; \, \mathsf \prod_{v \in V} \mathsf P_v \bigg)\\
 & =  \sum_{F \subseteq E} \frac{1}{2^{|E|}} \mathcal C\bigg(\prod_{e \in E} \chi_e(F) \prod_{v \in V}\prod_{\{ e \in E \, : \, v \in  e  \}} \big( 1 + \chi_e(F) z_v \big) \, ; \, \mathsf \prod_{v \in V} \mathsf P_v \bigg)\\
& =  \mathcal C\bigg(\sum_{F \subseteq E} \frac{1}{2^{|E|}} \prod_{e = \{u, v\} \in E} \chi_{e}(F)  \big( 1 + \chi_{e}(F) z_u \big) \big( 1 + \chi_{e}(F) z_v \big) \, ; \, \mathsf \prod_{v \in V} \mathsf P_v \bigg),
\end{split}
\end{equation}
where in the last step we use the elementary identity
\[
\prod_{v \in V}\prod_{\{ e \in E \, : \, v \in  e  \}} \big( 1 + \chi_e(F) z_v \big) =  \prod_{e = \{u, v\} \in E} \big( 1 + \chi_{e}(F) z_u \big) \big( 1 + \chi_{e}(F) z_v \big),
\]
and the linearity of $\mathcal C(\cdot, \prod_{v \in V} \mathsf P_v)$.

Notice that the sums in \eqref{e0} are taken over all possible sign choices of $\{ \chi_e :  e \in E  \}$. This observation motivates the introduction of a sequence of independent Bernoulli random variables $\varepsilon_e$ indexed by $e \in E$; that is,  $\mathbb P(\varepsilon_e = 1) = \mathbb P(\varepsilon_e = -1) = \frac12$.
Then we have
\begin{align} \label{f1}
\sum_{F \subseteq E} \frac{1}{2^{|E|}} \prod_{e = \{u, v \} \in E} \chi_{e}(F)  \big( 1 + \chi_{e}(F) z_u \big) \big( 1 + \chi_{e}(F) z_v \big) = \mathbb E \left[ \prod_{e = \{u, v \} \in E} \varepsilon_e  (1 + \varepsilon_e z_u) (1 + \varepsilon_e z_v)\right].
\end{align}
Since $\varepsilon_e$'s are i.i.d. Bernoulli random variables, 
\begin{equation}
\begin{split}\label{f2}
\mathbb E \left[ \prod_{e = \{ u, v\} \in E} \varepsilon_e  (1 + \varepsilon_e z_u) (1 + \varepsilon_e z_v)\right] &=  \prod_{ e = \{ u, v\} \in E} \mathbb E \big[ \varepsilon_e  (1 + \varepsilon_e z_u) (1 + \varepsilon_e z_v) \big] \\
& = \prod_{\{ u, v \} \in E} (z_u + z_v) = H_G({\bf z}).
\end{split}
\end{equation}

Combining \eqref{e0}-\eqref{f2} and using Proposition \ref{polynomial}, we thus obtain an alternative proof of Theorem \ref{Duality}. The key to this argument is identity \eqref{f2}: by introducing randomness, we express $H_G$ as the expectation of a family of random polynomials, each of which can be factorized into a product of univariate polynomials in $z_v$. Clearly, the identity \eqref{f2} has the following generalization. Let $\{ \epsilon_e\}_{e \in E}$ and $\{ \tilde{\epsilon}_e \}_{e \in E}$ be two sequences of independent random variables defined on some probability space $(\Omega, \mathbb P)$, and assume that for each $e \in E$,
\[
\mathbb E [\tilde\epsilon_e] = \mathbb E [\tilde\epsilon_e \epsilon_e^2] = 0, \quad \mathbb E [\tilde\epsilon_e \epsilon_e] = 1.
\]
Then,
\begin{align}
\mathbb E \left[ \prod_{e = \{u, v\} \in E} \tilde\epsilon_e  (1 + \epsilon_e z_u) (1 + \epsilon_e z_v)\right] = H_G({\bf z}).
\end{align}

In applications, we mostly consider the case where $\epsilon_e$ and $\tilde \epsilon_e$ take finitely many values. We thus have the following generalization of Theorem \ref{Duality}.

\begin{theo} \label{Duality'}
With the same notation as in Theorem \ref{Duality}. Let $N \in \N$ with $N \ge 2$, and let $\{\alpha(j)\}_{j = 1}^N$, $\{\beta(j)\}_{j = 1}^N$ be two sequences of complex numbers, satisfying
\[
\sum_{j=1}^{N}  \alpha(j) = \sum_{j=1}^{N}  \alpha(j) \beta(j)^2 = 0, \quad \sum_{j = 1}^{N}  \alpha(j) \beta(j) = 1.
\]
Then,
\[
\mC (G;  {\textstyle \prod_{v \in V} \mathsf P_v}) = \sum_{f \in [N]^E} \, \prod_{e\in E}  \alpha(f(e)) \prod_{v \in V} \mathcal C \bigg( \prod_{\{e\in E \, : \, v\in e  \}} \big(1 + \beta(f(e)) z \big) \, ; \,  \mathsf P_v  \bigg),
\]
where $[N]^E$ denotes the set of all $N$-colorings of the edge set $E$, that is, all maps from $E$ to $\{ 1, \ldots, N\}$.
\end{theo}

In fact, the random variables $\epsilon_e$ and $\tilde \epsilon_e$ can be chosen with considerable flexibility: independence is the only essential requirement, and their distributions need not be identical. This flexibility allows for edge-dependent distributions, which might be useful in specific applications. Furthermore, continuous-valued choices (e.g., with density functions) are also permissible, which corresponds to maps from $E$ to $\mathbb R$. Such constructions seem difficult to achieve within the framework of gauge transformations.

\section{Applications in graph theory}

\subsection{$N$-divisible orientations}

\begin{defn}
An orientation of a graph is called a \emphd{$N$-divisible orientation} if the out-degree is divisible by $N$ at each vertex. In particular, we refer to a $2$-divisible orientation as an \emphd{even orientation}.
\end{defn}

In this subsection, we consider the counting problem of $N$-divisible orientations. With the notation above, we need to compute $\mC (G; N\Z)$. Before proceeding, we introduce the following property of a function $f : E \to \N$ with an integer $N \ge 2$:
$$
\max_{v \in V} \big| \{ f(e) \, : \, v \in e   \} \big| \le N - 1 \eqno{(H_N)}
$$
And we define
\[
\mathcal A_N := \left\{ f \in [N]^E \, : \, f \ \  \text{satisfies property $(H_N)$ } \right\}.
\]

For $j = 1, \ldots, N$, we define
\[
\omega_N (j) : = \e^{\frac{2j\pi \i}{N}}.
\]
Then it is easy to verify that
\begin{equation} \label{sum0}
\mathcal C(q(z) \, ; \, N \Z) = \frac1N \sum_{j=1}^{N} q(\omega_N(j)).
\end{equation}
To apply Theorem \ref{Duality'}, we define  $\{\alpha(j)\}_{j = 1}^N$ and $\{\beta(j)\}_{j = 1}^N$ as follows: 
\[
\alpha(j) = -\frac{\e^{\frac{2j\pi \i}{N}}}{N} = -\frac{\omega_N(j)}{N}, \,  \quad \beta(j) = - \e^{\frac{-2j\pi \i}{N}} = -\overline{\omega_N(j)}.
\]
Then $\{\alpha(j)\}_{j = 1}^N$ and $\{\beta(j)\}_{j = 1}^N$ satisfy the conditions in Theorem \ref{Duality'}. By Theorem \ref{Duality'} and \eqref{sum0}, we obtain
\begin{equation} \label{sum}
\begin{split}
\mC (G;  N\Z) &= \sum_{f \in [N]^E} \, \prod_{e\in E}  \alpha(f(e)) \prod_{v \in V} \mathcal C \bigg( \prod_{\{e\in E \, : \, v\in e  \}} \big(1 + \beta(f(e)) z \big) \, ; \,  N\Z  \bigg)\\
& = \frac{(-1)^{|E|}}{N^{|E|+|V|}} \sum_{f \in [N]^E} \, \prod_{e\in E}  \omega_N(f(e)) \prod_{v \in V}   \bigg\{ \sum_{j = 1}^N \prod_{\{e\in E \, : \, v\in e  \}} \big(1 - \overline{\omega_N(f(e))} \omega_N(j) \big)  \bigg\}.
\end{split}
\end{equation}
Note that the roots of the polynomial 
$$\prod_{\{e\in E \, : \, v\in e  \}} \big(1 - \overline{\omega_N(f(e))}  z \big)$$ 
are exactly the set $\{ \omega_N(f(e)) \, : \, v \in e \}$. Consequently, if for some $f \in [N]^E$ and $v \in V$,
\[
\big| \{ f(e) \, : \, v \in e   \} \big| = N,
\]
then we have
\[
 \sum_{j = 1}^N \prod_{\{e\in E \, : \, v\in e  \}} \big(1 - \overline{\omega_N(f(e))} \omega_N(j) \big) = 0.
\]
Therefore, the sum on the right-hand side of \eqref{sum} is restricted to all $f \in \mathcal A_N$. Finally, we yield
\begin{align}
\mC (G;  N\Z) = \frac{(-1)^{|E|}}{N^{|E|+|V|}} \sum_{f \in \mathcal A_N} \, \prod_{e\in E}  \omega_N(f(e)) \prod_{v \in V}   \bigg\{ \sum_{j \in [N] \backslash \{ f(e) :  v \in e   \}} \prod_{\{e\in E  :  v\in e  \}} \big(1 - \overline{\omega_N(f(e))} \omega_N(j) \big)  \bigg\}.
\end{align}
This formula reveals a relationship between the number of $N$-divisible orientations and the $N$-colorings of the edge set. However, it seems difficult to simplify further and is not effective for practical computations. It is worth noting that, in general, property $(H_N)$ does not imply that $f(E)$ uses at most $N - 1$ colors. However, in the special case $N = 2$, property $(H_2)$ together with the connectivity of the graph, implies that all edges are assigned the same color. This observation allows for a further simplification of the formula in this case. We thus have the following result.

\begin{corl} \label{EVEN}
Let $G =(V, E)$ be a finite connected graph. Then the total number of even orientations of $G$ is equal to
$$
2^{|E| - |V|} \big( 1 + (-1)^{|E|} \big).
$$
\end{corl}

\begin{proof}
With the notation above, we need to compute $\mC (G; 2\Z)$. Noting that 
\[
\mathcal C(q(z) \, ; \, 2\Z) = \frac{q(1) + q(-1)}{2},
\]
we immediately yield that for $d \geq 1$ and $0 \leq k \leq d$,
\begin{equation} \label{e1}
\begin{split}
\mathcal C \big( (1 - z)^k (1 + z)^{d - k} \, ; \, 2\Z \big) = \begin{cases}
2^{d - 1} & \mbox{if}\ \ k = 0 \, \, \, {\rm or }\, \, \, d; \\
0 & \mbox{otherwise}.
\end{cases}
\end{split}
\end{equation}
Combining this with Theorem \ref{Duality}, it follows that in the summation expressions for $\mC (G; 2\Z)$, only the subgraphs $F$ satisfying $d_F(v) = 0$ or $d_v$ for all $v \in V$ are involved. Moreover, based on the connectedness of $G$, we can conclude that the subgraphs $F$ that satisfy these conditions must be either $\emptyset$ or $E$. Consequently,
\begin{align*}
\mC(G; 2\Z) = \frac{1}{2^{|E|}} 2^{ \sum_{v\in V} d_v - |V|} + \frac{(-1)^{|E|}}{2^{|E|}} 2^{\sum_{v\in V} d_v - |V|},
\end{align*}
The proof is then complete after applying the handshaking lemma.
\end{proof}

\begin{remk}
 To the best of our knowledge, Corollary~\ref{EVEN} is not listed in any paper, however it can be viewed as a corollary of Exercise 5.16 in \cite{lovasz2007combinatorial} by noticing that there is a bijection between the even spanning subgraphs and even orientations of the given connected graph. 
\end{remk}

\subsection{Mixed Eulerian-even orientations}
\begin{defn}
For a finite graph $G=(V, E)$ with $V = V_1 \bigsqcup V_2$, where every vertex in $V_1$ has an even degree, an orientation of $G$ is called a \emphd{mixed Eulerian-even orientation w.r.t. $(V_1, V_2)$} if every vertex $v \in V_1$ has equal in-degree and out-degree, and every vertex in $V_2$ has even out-degree.
\end{defn}
According to Corollary~\ref{EVEN}, for a connected graph, even orientations exist if and only if $|E|$ is even---that is, their existence depends on the global quantity $|E|$. In contrast, Eulerian orientations require a finer condition: each vertex must have even degree. In this section, we study the enumeration of orientations that mix these two types and provide a sufficient condition for their existence.

For an even integer $d$ and $0 \leq k \leq d$, we have
\begin{align*}
\mathcal C \big( (1- z)^k (1 +z)^{d - k} \, ; \, \{d/2\}  \big) = \sum_{j_1 + j_2 = d/2} (-1)^{j_1} \binom{k}{j_1} \binom{d - k}{j_2} = \frac{\binom{d}{d/2}}{\binom{d}{k}} \sum_{j_1=0}^{k} (-1)^{j_1} \binom{d/2}{j_1} \binom{d/2}{k - j_1}.
\end{align*}
Since $\sum_{j_1=0}^{k} (-1)^{j_1} \binom{d/2}{j_1} \binom{d/2}{k - j_1}$ is the coefficient of $z^{k}$ in $(1 - z)^{d/2}(1 + z)^{d/2} = (1 - z^2)^{d/2}$, we obtain
\begin{equation} \label{e2}
\begin{split}
\mathcal C \big( (1- z)^{k} (1 +z)^{d - k} \, ; \, \{d/2\}  \big) = \left\{ 
\begin{array}{cc} (-1)^{k/2}\frac{\binom{d}{d/2}\binom{d/2}{k/2}}{\binom{d}{k}} & \mbox{if}\ \ k\ \ \mbox{is even}; \\
0  & \mbox{if}\ \ k\ \ \mbox{is odd}.
\end{array} \right.
\end{split}
\end{equation}
Applying Theorem \ref{Duality} and using \eqref{e1}, \eqref{e2}, we know that
\begin{equation} \label{e3}
\begin{split}
& \mC \big(  G \, ; \, {\textstyle \prod_{v \in V_1}} \{ d_v/2 \} \times {\textstyle \prod_{v \in V_2}} 2\Z \big) = \sum_{F \subseteq E} \frac{(-1)^{|F|}}{2^{|E|}} \prod_{v \in V_1} (-1)^{d_F(v)/2}\frac{\binom{d_v}{d_v/2}\binom{d_v/2}{d_F(v)/2}}{\binom{d_v}{d_F(v)}} \cdot \prod_{v \in V_2} 2^{d_v - 1}\\
& = \left(\prod_{v \in V_1} \frac{\binom{d_v}{d_v/2}}{2^{d_v/2}}\right) \cdot 2^{\frac12 \sum_{v \in V_2} d_v - |V_2|} \cdot \bigg( \sum_{F \subseteq E} \prod_{v \in V_1}\frac{\binom{d_v/2}{d_F(v)/2}}{\binom{d_v}{d_F(v)}} \prod_{v\in V_2} (-1)^{d_F(v)/2} \bigg),
\end{split}
\end{equation}
where the sum is taken over all $F \subseteq E$ satisfying the following two conditions:
\begin{enumerate}
\item[(i)] for any $v \in V_1$, $d_F(v)$ is even;

\item[(ii)] for any $v \in V_2$, $d_F(v)$ is 0 or $d_v$;
\end{enumerate}
we denote the collection of all such $F$ by $\mathcal B(V_1, V_2)$. Now we have the following theorem.

\begin{theo} \label{mix}
Let $G = (V, E)$ be a finite graph. For the decomposition $V = V_1\bigsqcup V_2$, where $V_1$ and $V_2$ satisfy 
\begin{enumerate}
\item[$\bullet$] every vertex $v \in V_1$ has an even degree;

\item[$\bullet$] after decomposing the induced subgraph $G[V_2]$ into its components $G[V_2^{(1)}], \ldots, G[V_2^{(m)}]$, each $V_2^{(i)}$ satisfies $4 \mid \sum_{v\in V_2^{(i)}} d_v$.

\end{enumerate}
Then there exists a mixed Eulerian-even orientation w.r.t. $(V_1, V_2)$. Moreover, the total number of such orientations is equal to
\[
 \left( \prod_{v \in V_1} \frac{\binom{d_v}{d_v/2}}{2^{d_v/2}} \right) \cdot 2^{\frac12 \sum_{v \in V_2} d_v - |V_2|} \cdot \sum_{F \in \mathcal B(V_1, V_2)} \prod_{v \in V_1}\frac{\binom{d_v/2}{d_F(v)/2}}{\binom{d_v}{d_F(v)}},
\]
which is bounded from below by
\[
\left( \prod_{v \in V_1} \frac{\binom{d_v}{d_v/2}}{2^{d_v/2}} \right) \cdot 2^{\frac12 \sum_{v \in V_2} d_v - |V_2|}.
\]
\end{theo}

\begin{proof}
For each $i = 1, \ldots, m$, by the connectedness of $(V_2^{(i)}, E)$, the subgraph $F$ that satisfies condition (ii) must have either $d_F(v) = 0$ for all $v \in V_2^{(i)}$, or $d_F(v) = d_v$ for all $v \in V_2^{(i)}$. Regardless of which case holds, since $4 \mid \sum_{v \in V_2^{(i)}} d_v$, we always have 
\[
\prod_{v\in V_2} (-1)^{d_F(v)/2} = \prod_{i = 1}^{m} \prod_{v\in V_2^{(i)}} (-1)^{d_F(v)/2} = 1.
\]
Consequently, it follows from \eqref{e3} that
\begin{align*}
\mC \big(  G \, ; \, {\textstyle \prod_{v \in V_1}} \{ d_v/2 \} \times {\textstyle \prod_{v \in V_2}} 2\Z \big) &= \left(\prod_{v \in V_1} \frac{\binom{d_v}{d_v/2}}{2^{d_v/2}} \right) \cdot 2^{\frac12 \sum_{v \in V_2} d_v - |V_2|} \cdot \sum_{F \in \mathcal B(V_1, V_2)} \prod_{v \in V_1}\frac{\binom{d_v/2}{d_F(v)/2}}{\binom{d_v}{d_F(v)}}\\
& \geq \left( \prod_{v \in V_1} \frac{\binom{d_v}{d_v/2}}{2^{d_v/2}} \right) \cdot 2^{\frac12 \sum_{v \in V_2} d_v - |V_2|},
\end{align*}
which completes the proof.
\end{proof}

\begin{remk}
It should be pointed out that the second condition in Theorem \ref{mix} is only a sufficient one for the existence of a mixed Eulerian-even orientation, but it is not necessary. To see this, let us consider the graph
$$
G=(V,E),\qquad  
V=\{v_1,v_2,v_3,v_4\},\quad  
E=\big\{\{v_1,v_2\},\ \{v_1,v_4\},\ \{v_2,v_3\}\big\},
$$
and let
$$
V_1=\{v_1\},\qquad V_2=\{v_2,v_3,v_4\}.
$$
Since $d_{v_1} = 2$ is even, the first condition of Theorem~\ref{mix} is satisfied. However, the induced subgraph $G[V_2]$ consists of two components, namely $\{v_2,v_3\}$ and $\{v_4\}$, whose degree sums in $G$ are $3$ and $1$, respectively.  Neither of them is divisible by $4$, so the second condition does not hold.
Nevertheless, orienting the edges as
$$
v_2 \to v_1,\qquad v_1 \to v_4,\qquad v_2 \to v_3
$$
yields a mixed Eulerian-even orientation w.r.t. $(V_1,V_2)$.
\end{remk}

\section*{Acknowledgements}
J.Y. acknowledges partial support from the China Postdoctoral Science Foundation grant GZC20252041 and the National Natural Science Foundation of China grant 12371343 (PI: Hehui Wu).   
J.-X.Z. acknowledges
support from the Natural Science Foundation of
Shanghai (25ZR1402414) and the NSFC (12501185). 
We also thank the anonymous referees for reading the manuscript carefully and providing helpful comments.

\printbibliography
\end{document}